\pgfplotsset{compat=1.15} % pgfplots version
\let\cl@chapter\undefined
\pgfplotsset{%
    layers/standard/.define layer set={%
        background,axis background,axis grid,axis ticks,axis lines,axis tick labels,pre main,main,axis descriptions,axis foreground%
    }{
        grid style={/pgfplots/on layer=axis grid},%
        tick style={/pgfplots/on layer=axis ticks},%
        axis line style={/pgfplots/on layer=axis lines},%
        label style={/pgfplots/on layer=axis descriptions},%
        legend style={/pgfplots/on layer=axis descriptions},%
        title style={/pgfplots/on layer=axis descriptions},%
        colorbar style={/pgfplots/on layer=axis descriptions},%
        ticklabel style={/pgfplots/on layer=axis tick labels},%
        axis background@ style={/pgfplots/on layer=axis background},%
        3d box foreground style={/pgfplots/on layer=axis foreground},%
    },
}
\newtheorem{theorem}{Theorem}
\crefname{theorem}{Theorem}{Theorems}
\newtheorem{lemma}{Lemma}
\crefname{lemma}{Lemma}{Lemmas}
\newtheorem{asm}{Assumption}
\crefname{asm}{Assumption}{Assumptions}
\newtheorem{example}{Example}
\crefname{example}{Example}{Examples}
\DeclareMathOperator*{\argmin}{argmin}
\DeclareMathOperator*{\dom}{dom}
\DeclareMathOperator*{\interior}{int}
\newcommand{\Constraints}{\mathcal{X}}
\newcommand{\ProdSpace}{\mathbb{X}}
\begin{document} %%%% BEGIN %%%%%%%%%%%%%%%%%%%%%%
%%%%%%%%%%%%%%%%%%%%%%%%%%%%%%%%%%%%%%%%%%%%%%%%%%

\title{Delay-tolerant Distributed Bregman Proximal Algorithms}

\author[S. Chraibi]{S. Chraibi}
\address[S. Chraibi]{Univ. Grenoble Alpes, LJK, 38000 Grenoble, France}
\email[S. Chraibi]{selim.chraibi@univ-grenoble-alpes.fr}

\author[F. Iutzeler]{F. Iutzeler}
\address[F. Iutzeler]{Univ. Grenoble Alpes, CNRS, Grenoble INP, LJK, 38000 Grenoble, France}

\author[J. Malick]{J. Malick}
\address[J. Malick]{Univ. Grenoble Alpes, CNRS, Grenoble INP, LJK, 38000 Grenoble, France}

\author[A. Rogozin]{A. Rogozin}
\address[A. Rogozin]{Moscow Institute of Physics and Technology, Moscow, Russia}

\maketitle

\begin{abstract}
    Many problems in machine learning write as the minimization of a sum of individual loss functions over the training examples. These functions are usually differentiable but, in some cases, their gradients are not Lipschitz continuous, which compromises the use of (proximal) gradient algorithms. Fortunately, changing the geometry and using Bregman divergences can alleviate this issue in several applications, such as for Poisson linear inverse problems.
    However, the Bregman operation makes the aggregation of several points and gradients more involved, hindering the distribution of computations for such problems.
    In this paper, we propose an asynchronous variant of the Bregman proximal-gradient method, able to adapt to any centralized computing system.
    In particular, we prove that the algorithm copes with arbitrarily long delays and we illustrate its behavior on distributed Poisson inverse problems.

    \keywords{\em{Distributed optimization; Asynchronous methods; Proximal algorithms; Bregman divergence; Poisson inverse problems}}
\end{abstract}

%%%%%%%%%%%%%%%%%%%%%%%%%%%%%%%%%%%%%%%%%%%%%%%%%%
%%%%%%%%%%%%%%%%%%%%%%%%%%%%%%%%%%%%%%%%%%%%%%%%%%
\section{Introduction}\label{sec:intro}
%%%%%%%%%%%%%%%%%%%%%%%%%%%%%%%%%%%%%%%%%%%%%%%%%%
%%%%%%%%%%%%%%%%%%%%%%%%%%%%%%%%%%%%%%%%%%%%%%%%%%

%%%%%%%%%%%%%%%%%%%%%%%%%%%%%%%%%%%%%%%%%%%%%%%%%%
\subsection{Context: distributed problems and lack of smoothness}
%%%%%%%%%%%%%%%%%%%%%%%%%%%%%%%%%%%%%%%%%%%%%%%%%%
Many problems in machine learning and signal processing involve the minimization of a sum of functions measuring the loss between the model and the data points.
This sum form is highly practical when the data is distributed over several machines: the total loss is simply the sum of the losses over each machine's local data, and, similarly, the gradient of the total loss is the sum of the machines' local gradients. Thus, a central machine can minimize the total loss using a first-order method by simply querying the machines for their local gradients.
Such a method is called \emph{synchronous} since, at each iteration, the central machine waits for all the machines to respond before proceeding to a new query.
Unfortunately, the time a machine takes to respond can vary a lot due to network/communication issues, uneven data, or heterogeneous computing power.
This calls for \emph{asynchronous} methods where the central machine updates the global model and queries a new gradient, as soon as a machine responds.
Despite their more involved analysis, asynchronous optimization methods have become popular thanks to their practical efficiency. Indeed, having asynchronous exchanges is crucial in practice since then much more iterations can be performed in the same time interval compared to synchronous setups,
see eg. the discussion in\;\cite{hannah2017more}.

Most existing analyses of asynchronous first-order methods assume that the functions are \emph{smooth} (ie. that their gradients are Lipschitz continuous). While such an assumption often holds, several objectives of interest do not satisfy it, even though they are differentiable; it is the case, for example, for recovery from quadratic measurements \cite{bolte2018first},  and Poisson inverse problems \cite{bertero2009image}.
This lack of smoothness breaks down the usual ``descent lemmas'' at the core of the analysis of first-order methods; see eg. \cite[Chap. 10]{beck2017first}. Fortunately, for several problems of interest, including the two mentioned above, changing the geometry can alleviate the issue of lack of smoothness. The idea is to use, instead of Euclidean geometry, the geometry induced by so-called \emph{Bregman divergences} \cite{bregman1967relaxation,bauschke1997legendre}. Indeed, a descent/contraction lemma can be obtained for functions that are smooth with respect to a Bregman divergence \cite{bauschke2017descent}, such as in the two examples above.
This opens the way for the application and analysis of first-order methods, as we recall in \Cref{sec:smooth}.  Our work in this paper can be included in this line of research: we develop and analyze a \emph{distributed version} of the Bregman proximal  method of \cite{bauschke2017descent}.

%%%%%%%%%%%%%%%%%%%%%%%%%%%%%%%%%%%%%%%%%%%%%%%%%%
\subsection{Asynchronous Bregman proximal-gradient in centralized set-up}
%%%%%%%%%%%%%%%%%%%%%%%%%%%%%%%%%%%%%%%%%%%%%%%%%%
We consider a centralized setup where a central machine coordinates asynchronously $M$ worker machines to solve an optimization problem of the form
\begin{align}
    \label{eq:pb}
    \tag{$\mathcal{P}$}
    \min_{x\in\Constraints}~~ F(x) := \frac{1}{M}\sum_{i=1}^M f_i(x) + g(x).
\end{align}

We make no special assumption on the underlying system which can be completely heterogeneous.
In the optimization problem, each function $f_i$ is local to machine~$i$, and the central machine shares the search space $\Constraints\subset\mathbb{R}^n$ and a regularization function~$g$.
We focus on the case where $g$ is convex and lower semi-continuous (\emph{lsc}), and that $f_i : \Constraints\to\mathbb{R}$ is convex and differentiable -- but not necessarily smooth (ie. we do \emph{not} assume that the gradients are Lipschitz continuous on $\Constraints$).

The lack of smoothness ruins the theoretical properties of existing asynchronous distributed proximal-gradient algorithms (see e.g.,\;\mbox{\cite{mishchenko2020distributed, vanli2018global}} and references therein) in the same way that it ruins the convergence of standard (non-distributed) proximal gradient methods (which has led to the works of \cite{bauschke2017descent} and \cite{lu2017relativelysmooth}). In particular, \cite{bauschke2017descent} proposes a Bregman proximal-gradient method that fits our assumptions. However, an asynchronous distributed version of this method still has to be designed and analyzed, and this is what we propose in this paper.

More specifically, we use the tools developed in \cite{bauschke2017descent} to extend the asynchronous proximal-gradient algorithm of \cite{mishchenko2020distributed} when the local functions $f_i$ are not smooth in the Euclidean geometry, but rather in an adapted Bregman geometry.
The resulting Bregman proximal-gradient algorithm copes with flexible asynchronous communications, and we analyze its convergence under mild assumptions on both the functions and the computing setup. First, we rely on the same set of assumptions as in \cite{bauschke2017descent} on the functions which enable us to control the \emph{Bregman divergence} between iterates and the solution. This change of metric (and in particular the loss of symmetry) introduces technical challenges compared to  \cite{mishchenko2020distributed, vanli2018global} which rely on controlling the \emph{Euclidean distance}.
Second, we pay special attention to having realistic assumptions on communication delays to cover a diversity of the scenarios included in this framework (e.g. computing clusters, mobile devices):
we do \emph{not} assume that the delays between communications are bounded and we only assume that all workers eventually communicate with the central machine, as per the definition of \emph{totally asynchronous} methods by Bertsekas and Tsitsiklis' classification \cite[Chap.\;6.1]{bertsekas1989parallel}. To the best of our knowledge, most papers on asynchronous distributed methods rely on a bounded delay assumption; exceptions include \cite{sun2017asynchronous,hannah2017more,hannah2016unbounded,icml2018,mishchenko2020distributed} but they rely on the smoothness of the objective that we do not have here.

The outline of the paper is as follows. In \Cref{sec:recalls}, we introduce the notation and recall the ideas of, both, Bregman smoothness \cite{bauschke2017descent} and standard asynchronous proximal-gradient algorithms \cite{vanli2018global, mishchenko2020distributed}.
In \Cref{sec:algo}, we present our asynchronous Bregman algorithm, by adapting the developments of the Euclidean setting. Then we analyze in \Cref{sec:analysis} the well-posedness and the convergence of our algorithm. Notably, we prove convergence for a fixed stepsize, independent of the computing system (in particular independent of delays in the system). Finally, we provide, in \Cref{sec:num}, numerical illustrations of the behavior of the algorithm on distributed Poisson inverse
problems.

%%%%%%%%%%%%%%%%%%%%%%%%%%%%%%%%%%%%%%%%%%%%%%%%%%
%%%%%%%%%%%%%%%%%%%%%%%%%%%%%%%%%%%%%%%%%%%%%%%%%%
\section{Notation and recalls on proximal-gradient and Bregman geometry}\label{sec:recalls}
%%%%%%%%%%%%%%%%%%%%%%%%%%%%%%%%%%%%%%%%%%%%%%%%%%
%%%%%%%%%%%%%%%%%%%%%%%%%%%%%%%%%%%%%%%%%%%%%%%%%%

This section introduces the notation used in this paper and recalls the main notions and ideas.
\Cref{sec:from} presents the natural splitting of the proximal-gradient method that serves as a basis for the developments of the next section. It also introduces important notation about delays that will be constantly used in the sequel. \Cref{sec:smooth} presents how Bregman smoothness can replace the usual smoothness as a fundamental tool for convergence analysis.

%%%%%%%%%%%%%%%%%%%%%%%%%%%%%%%%%%%%%%%%%%%%%%%%%%
\subsection{From proximal-gradient to asynchronous optimization}\label{sec:from}
%%%%%%%%%%%%%%%%%%%%%%%%%%%%%%%%%%%%%%%%%%%%%%%%%%

A natural rationale to distribute algorithm is to first endow each worker with a copy $x_i$ of the global variable and impose a consensus through the indicator function $\iota_C : \Constraints^M \to \overline{\mathbb{R}}$ defined as
\begin{equation}\label{eq:C}
    \text{$\iota_C(x_1,\dots,x_M) = 0 $ if $x_i=x_j$ for all $i,j$ and $+\infty$ elsewhere.}
\end{equation}
Mathematically, we end up with the following problem equivalent \footnote{Equivalent here means that $x^\star$ is a solution of \eqref{eq:pb} if and only if $(x^\star,\dots,x^\star)$ is a solution of \eqref{eq:pb_dist}.}
to \eqref{eq:base}
\begin{align}
    \label{eq:pb_dist}
    \min_{(x_1,\dots,x_M)\in\Constraints^M}~~ \frac{1}{M}\sum_{i=1}^M f_i(x_i) + g(x_i) + \iota_C(x_1,\dots,x_M)
\end{align}
in which the first term is differentiable while the two others are convex and lower semi-continuous.

Such a problem naturally calls for proximal gradient methods (where the differentiable part of the objective is iteratively replaced by a quadratic model). Specifically, for a given step size $\gamma$
\begin{multline}
    (x_1^k,\dots,x_M^k) = \argmin_{(x_1,\dots,x_M)\in\Constraints^M}
    \left\{ \frac{1}{M}\sum_{i=1}^M \left( f_i(x^{k-1}_i) + \langle x_i - x_i^{k-1} ; \nabla f_i(x_i^{k-1}) \rangle + \dots \vphantom{\left\|\right\|^2} \right. \right. \\
    \left. \left. \dots + \frac{1}{2\gamma} \left\| x_i - x_i^{k-1} \right\|^2 + g(x_i) \right)
    + \iota_C(x_1,\dots,x_M) \vphantom{\sum_{i=1}^M} \right\}
\end{multline}
where the particular form of $\iota_C$ immediately leads to $x_1^k=\dots=x_M^k$. Thus we have, for all $i\in 1,\dots,M$,
\begin{align}
    x_i^k
     & = \argmin_{x\in\Constraints} \left\{ g(x) +  \frac{1}{M}\sum_{i=1}^M \left( f_i(x^{k-1}_i) + \langle x - x_i^{k-1} ; \nabla f_i(x_i^{k-1}) \rangle + \frac{1}{2\gamma} \left\| x - x_i^{k-1}  \right\|^2 \right)  \right\} \\
     & = \argmin_{x\in\Constraints} \left\{ g(x) +  \left\| x - \frac{1}{M}\sum_{i=1}^M  \left(x_i^{k-1} - \gamma \nabla f_i(x_i^{k-1}) \right)  \right\|^2 \right\}  .\label{eq:gradprox}
\end{align}

In terms of distributed optimization, this means that the central machine has to gather the gradient steps of all the workers  ($x_i^{k-1} - \gamma \nabla f_i(x_i^{k-1})$), average them, and perform a proximal operation for $g$ on $\Constraints$ to get the next iterate, then send the result to all workers.
This algorithm is by construction completely synchronous.

Let us now derive an asynchronous version of this algorithm. To do so, we need some notation, introduced below and illustrated in \Cref{fig:scheme}.
First, we call iteration (or time) $k$ the moment of the $k$-th exchange between a worker and the central machine. Second, we denote by $d_i^k$ the delay suffered by worker $i$ at time $k$, defined by the number of iterations since worker $i$ last exchanged with the central machine. We also define the \emph{second-order} delay for worker~$i$ and time~$k$ by
\begin{equation}\label{eq:D}
    D_i^k = d_i^k + d_i^{k-d_i^k-1}+1.
\end{equation}
These two delays allow us to handle, at each time $k$,
the
worker\;$i$'s gradient previously received by the central machine at the last exchange $k-d_i^k$, which was itself computed at a point of the \emph{second} last exchange $x^{k-D_i^k}$.

\begin{figure}[!h]
    \centering
    \resizebox{0.9\textwidth}{!}{\includegraphics{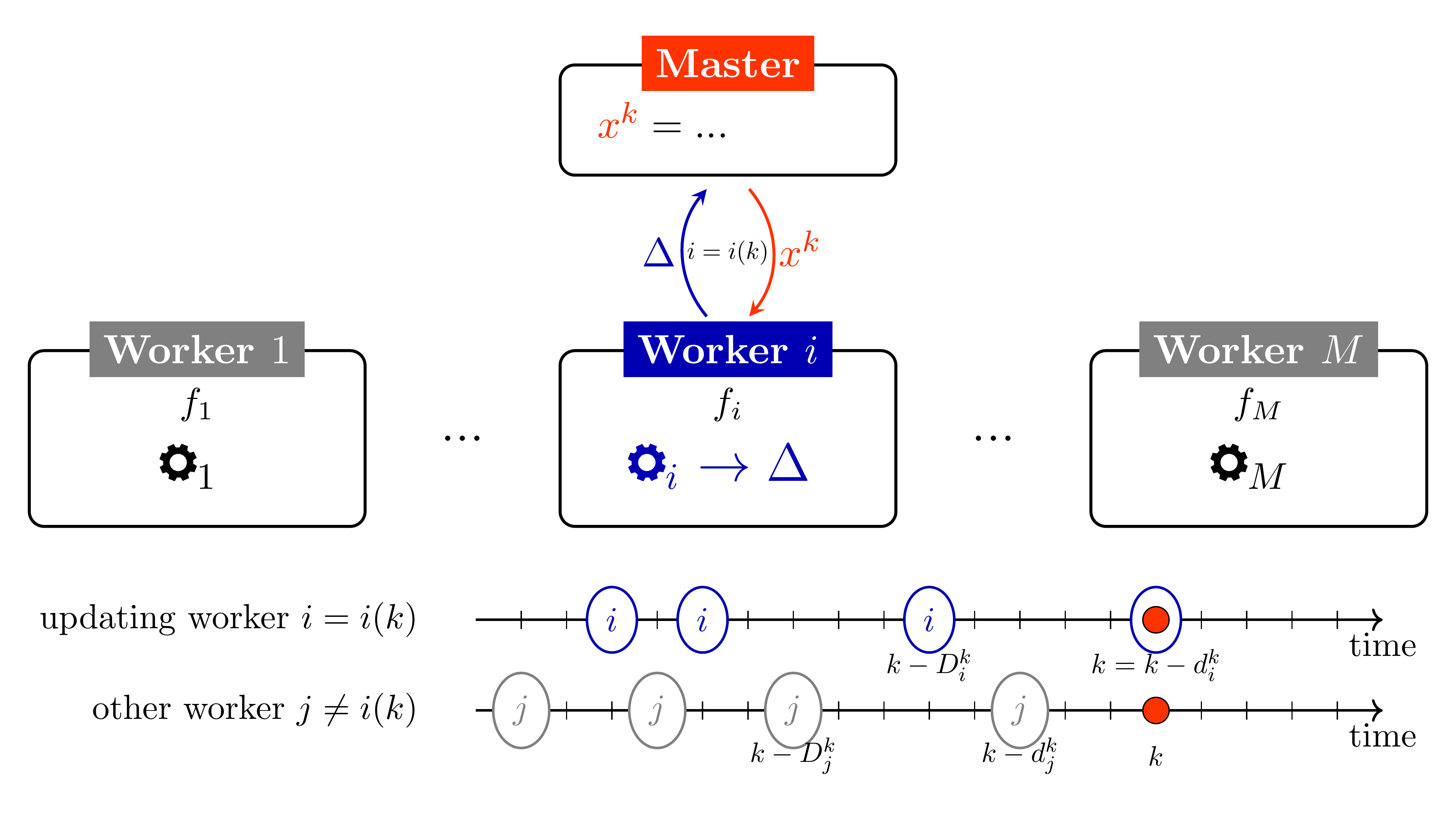}}
    \vspace*{-2ex}
    \caption{Asynchronous distributed setting and notation. Top: As soon as a worker finishes its computation, it sends its updates to the master, gets a new query point, and proceeds to the next computation. Bottom about delays notation: $d_i^k$ is the delay suffered by worker $i$ at time $k$ defined as the number of iterations since worker $i$ last exchanged with the master (the exchanging worker $i=i(k)$ has no delay); the second-order delay is $D_i^k$ corresponds to the second-last exchange.}
    \label{fig:scheme}
\end{figure}

To get an asynchronous variant of \eqref{eq:gradprox}, the iteration has to be modified as follows.
Regarding the use of gradient, an asynchronous version has to replace $ \nabla f_i(x^{k-1})$ by $ \nabla f_i(x^{k-D_i^k})$ in the iteration. Regarding the base point, there are two possibilities:
\begin{itemize}
    \item either it is kept at the last iterate $x^{k-1}$, which leads to the update of the proximal incremental aggregated gradient (PIAG) method \cite{aytekin2016analysis,vanli2018global}:
          \begin{align}
              x^k & = \argmin_{x\in\Constraints} \left\{ g(x) +  \left\| x - \frac{1}{M}\sum_{i=1}^M  \left(x^{k-1} - \gamma \nabla f_i(x^{k-D_i^k}) \right)  \right\|^2 \right\} ;
          \end{align}
    \item or it is set to the point corresponding to the gradient computation $x^{k-D_i^k}$, which leads to the update of DAve-PG \cite{icml2018,mishchenko2020distributed}:
          \begin{align}
              x^k & = \argmin_{x\in\Constraints} \left\{ g(x) +  \left\| x - \frac{1}{M}\sum_{i=1}^M  \left(x^{k-D_i^k} - \gamma \nabla f_i(x^{k-D_i^k}) \right)  \right\|^2 \right\} .
          \end{align}
\end{itemize}
For our analysis, we focus here on the second option which is often faster in the Euclidean case, as shown in \cite[Sections 2.4 and 3.4]{mishchenko2020distributed}). We will come back to this distinction in the numerical illustrations.

%%%%%%%%%%%%%%%%%%%%%%%%%%%%%%%%%%%%%%%%%%%%%%%%%%
\subsection{Descent with and without smoothness}\label{sec:smooth}
%%%%%%%%%%%%%%%%%%%%%%%%%%%%%%%%%%%%%%%%%%%%%%%%%%

Smoothness is a central property to obtain functional descent and contraction in first-order methods \cite{nesterov-book,beck2017first}. Let a function $f:\mathbb{R}^n\to \mathbb{R}$ be convex and differentiable. If $f$ is furthermore $L$-smooth, which means
\begin{equation}\label{eq:smooth}
    \text{$\nabla f$ is $L$-Lipschitz continuous or, equivalently,
        $L\|\cdot\|_2^2/2-f$ is convex},
\end{equation}
then we have that (see eg. \cite[Lem.~5.]{beck2017first})
\begin{align}
    f(y) \leq f(x) + \langle y-x ; \nabla f(x) \rangle + \frac{L}{2} \| y - x\|_2^2 ~~\text{ for all } x,y\in\mathbb{R}^n
\end{align}
Thus the gradient step, defined as
\begin{align}
    p_\gamma(x) & = \argmin_{y} \left\{  f(x) + \langle y-x ; \nabla f(x) \rangle + \frac{1}{2\gamma} \| y - x\|_2^2 \right\}, \label{eq:grad} \\
                & = x - \gamma \nabla f(x),
\end{align}
grants both descent and contraction (see eg. \cite[Th. 18.15]{bauschke2011convex}): for all $x\in\mathbb{R}^n$ and for a minimizer $x^\star$ of $f$, we have
\begin{align}
    f(p_\gamma(x) )                              & \leq f(x)   - \gamma \left(1- \frac{L\gamma}{2}\right) \| \nabla f(x) \|_2^2, \quad\text{ and }                                                       \\
    \left\|  p_\gamma(x) - x^\star  \right\|_2^2 & \leq    \left\|  x - x^\star  \right\|_2^2 -   \gamma \left(\frac{2}{L} - \gamma \right)   \left\| \nabla f(x) \right\|_2^2 \label{eq:contract:eucl}.
\end{align}

When smoothness is not present, these inequalities do not hold anymore, which hinders the analysis of gradient methods. Fortunately, for a function $f\colon\Constraints\to\mathbb{R}$, convex and differentiable on the interior of $\Constraints$, some smoothness can be obtained by comparing it to a Bregman function; see eg.~\cite[Ch.\;26]{rockafellar2015convex}.
\begin{asm}[Bregman regularizer]\label{asm:reg}
    The function $h\colon\mathbb{R}^n\to\mathbb{R}$ satisfies the following conditions:
    $h$ is proper, lower semi-continuous, and convex;
    ${\dom h} = \Constraints$;
    $h$ is of Legendre type\footnote{We recall that a Legendre function is differentiable and strictly convex on $\interior\dom h$ with $\|\nabla h(x^k)\| \to +\infty$ for $(x^k)$ of  $\interior\dom h$ converging to a boundary point of $\dom h$.}.
\end{asm}

Under this assumption, we have that $\dom \partial h = \interior\dom h$ and that for all $x\in \interior\dom h$, $\partial h(x) = \{\nabla h(x)\}$.
Building on this function $h$, we can define the Bregman distance\;\cite{bregman1967relaxation}
\begin{align}
    D_h(x,y) = h(x) - h(y) -\langle \nabla h(y) , x-y \rangle
\end{align}
for any $x\in\dom h$ and any $y\in\interior\dom h$.
We also define Bregman gradient steps \cite{auslender2006interior}
for $x\in\interior \dom h$, similarly as \eqref{eq:grad}, by
\begin{align}
    p_\gamma(x) & = \argmin_{y\in\dom h} \left\{  f(x) + \langle y-x ; \nabla f(x) \rangle + \frac{1}{\gamma} D_h(y,x) \right\} \\
                & = \argmin_{y\in\dom h} \left\{  h(y)  + \langle y ; \gamma \nabla f(x) - \nabla h(x) \rangle \right\}.
\end{align}
The simple yet powerful idea of \cite{bauschke2017descent} is then to compare $f$ to $h$ in order to extend smoothness \eqref{eq:smooth} beyond the Euclidean case: if there is an $L>0$ such that $Lh-f$ is convex on $\interior\dom h$, then \cite[Lem. 1]{bauschke2017descent}
\begin{align}
    f(y) \leq f(x) + \langle y-x ; \nabla f(x) \rangle + L D_h(y,x) ~~\text{ for all } x,y\in\interior\dom h .
\end{align}
Therefore, counterparts of \eqref{eq:contract:eucl} hold
\begin{align}
    f(p_\gamma(x) )          & \leq f(x)   - \frac{1}{\gamma} D_h(x,p_\gamma(x)) - (1-\gamma L) D_h(p_\gamma(x),x), \text{ and } \\
    D_h(x^\star,p_\gamma(x)) & \leq D_h(x^\star,x)  - (1-\gamma L) D_h(p_\gamma(x),x)\label{eq:contractionB}
\end{align}
for all $x\in\interior \dom h$ and $x^\star$ a solution of $\min_\Constraints f$,\footnote{Here, we use explicitly that $\dom h = \Constraints$.} see eg. \cite[Lem. 5]{bauschke2017descent}.

\begin{example}[Nonnegative linear regression]\label{example}
    Let $A\in\mathbb{R}_+^{m\times n}$ be matrix with non-null rows $(a_i)_{i\in 1,\ldots,m}$, $b\in\mathbb{R}_{++}^m$ a positive output vector. Nonnegative linear regression minimizes the generalized Kullback-Leibler divergence between a linear model $Ax$, with $x\in\Constraints=\mathbb{R}_+^{n}$, and the output $b$ (see eg. \cite[Sec 5.3]{bauschke2017descent}).
    \begin{align}
        \min_{x\in \mathbb{R}_+^{n}} f(x) := \text{KL}(Ax, b)
    \end{align}
    Where for $v,u \in \mathbb{R}^m_+$, $\text{KL}(v, u) = \sum_{i=1}^m v_i \log v_i / u_i  -v_i  + u_i$ and $\text{KL}(0, u) = \|u\|_1$, by continuous extension. We see that $f$ is differentiable on $\interior \Constraints$, but it is not smooth. It is however smooth relative to the Boltzmann-Shannon entropy ($h(x) = \sum_{j=1}^n x_j \log x_j$ with $\dom h = \Constraints$), since by \cite[Lem. 8]{bauschke2017descent}, $Lh-f$ is convex on $\interior\dom h$ with
    \begin{equation}\label{eq:L}
        L = \max_{j}\sum_{i=1}^m a_{ij}\ .
    \end{equation}
    Since $\nabla f(x) = \sum_{i=1}^m a_i \log( \langle a_i , x \rangle/b_i )$ and $\nabla h(x) = 1 + \log(x)$, the Bregman gradient operator $ p_\gamma(x) = \argmin_{y\in\dom h} \{  h(y)  + \langle y ; \gamma \nabla f(x) - \nabla h(x) \rangle\} $ can be computed coordinate-wise and we obtain that for any $x\in\interior\dom h$,
    \begin{align}
        \left[p_\gamma(x)\right]_j & = \argmin_{y\geq 0} \left\{  y \log y + y \left( \gamma \sum_{i=1}^m a_{ij} \log\left( \frac{\langle a_i , x \rangle}{b_i}  \right) - 1 - \log(x_j)  \right) \right\} \\
                                   & = x_j \exp \left( - \gamma \sum_{i=1}^m a_{ij} \log\left( \frac{\langle a_i , x \rangle}{b_i}  \right)    \right)
        = \frac{x_j}{\prod_{i=1}^m \left( \frac{\langle a_i , x \rangle}{b_i}  \right)^{\gamma\; a_{ij}}} .
    \end{align}
    Hence, $p_\gamma(x) \in\interior\dom h$ and brings functional descent and contraction from \eqref{eq:contractionB} with $\gamma\leq 1/L$ given by \eqref{eq:L}.
    Note that the objective admits a unique minimizer on $\Constraints$ when $A$ is full-rank. In this case indeed, observe first that $f$ is coercive which proves the existence of a minimizer on $\Constraints$.
    Second, considering two minimizers $x_1, x_2$ of $f$ on $ \mathbb{R}_+^{n}$, the strict convexity of  $
        v \mapsto \text{KL}(v, u)$ implies that $x_1-x_2\in\mathrm{ker} A  = \{0\}$, which shows the uniqueness.
    \hfill $\blacktriangleleft$

\end{example}

%%%%%%%%%%%%%%%%%%%%%%%%%%%%%%%%%%%%%%%%%%%%%%%%%%
%%%%%%%%%%%%%%%%%%%%%%%%%%%%%%%%%%%%%%%%%%%%%%%%%%
\section{Asynchronous Bregman proximal gradient}\label{sec:algo}
%%%%%%%%%%%%%%%%%%%%%%%%%%%%%%%%%%%%%%%%%%%%%%%%%%
%%%%%%%%%%%%%%%%%%%%%%%%%%%%%%%%%%%%%%%%%%%%%%%%%%

%%%%%%%%%%%%%%%%%%%%%%%%%%%%%%%%%%%%%%%%%%%%%%%%%%
\subsection{Assumptions}
%%%%%%%%%%%%%%%%%%%%%%%%%%%%%%%%%%%%%%%%%%%%%%%%%%

In the framework introduced in the previous section, we present our algorithm for solving \eqref{eq:base}, under the following assumptions.
First, we need to assume that the problem has minimizers, which can be done by constraining $\Constraints$ to be closed and convex and $g$ to be lower semi-continuous, and proper.

\begin{asm}[On the problem]\label{asm:pb}
    The following conditions hold:
    \begin{itemize}
        \item[i.] the constraint set $\Constraints\subseteq \mathbb{R}^n$ is closed\footnote{The closedness, combined with \cref{asm:reg}, implies that $\dom h$ is closed, which is not always assumed in the case of optimization with Bregman divergences. In our case, it will be helpful to characterize pointwise convergence (as in eg. \cite[Th. 2 ii]{bauschke2017descent}) since functional decrease is out of reach in our case (see \cite{mishchenko2020distributed}).} and convex with a non-empty interior;
        \item[ii.] the function $g$ is proper, lsc, convex, with $\dom g \cap \interior \Constraints\neq\emptyset$ ;
        \item[iii.] the minimizers of \eqref{eq:pb} form a non-empty compact set in $\Constraints$.
    \end{itemize}
\end{asm}

We also formalize our assumption that the local functions $f_i$ of the problem are smooth with respect to the Bregman divergence generated by $h$ in the sense of \cite{bauschke2017descent}.

\begin{asm}[On the functions $(f_i)$]\label{asm:fun}
    For every $i=1,\dots,m$, we assume that  the $\Constraints\to\mathbb{R}$ function $f_i$ verifies:
    \begin{itemize}
        \item[i.] $f_i$ is proper, lower semi-continuous, and convex;
        \item[ii.] $f_i$ is differentiable on $\interior\dom h$;
        \item[iii.] $f_i$ is  $L$-smooth with respect to $h$, ie. $Lh-f_i$ is convex on $\interior\dom h$.
    \end{itemize}
\end{asm}

As we will see in \Cref{sec:well}, this set of assumptions ensures that the Bregman steps at the core of our development are well-defined.

%%%%%%%%%%%%%%%%%%%%%%%%%%%%%%%%%%%%%%%%%%%%%%%%%%
\subsection{From Bregman proximal gradient to asynchronous optimization}\label{sec:bregmanasync}
%%%%%%%%%%%%%%%%%%%%%%%%%%%%%%%%%%%%%%%%%%%%%%%%%%

We now take another look at problem \eqref{eq:pb_dist} by considering a Bregman proximal gradient method. We can perform steps of the form
\begin{multline}
    (x_1^k,\dots,x_M^k) = \argmin_{(x_1,\dots,x_M)\in\Constraints^M}
    \left\{ \frac{1}{M}\sum_{i=1}^M \left( \vphantom{\frac{1}{\gamma}} f_i(x^{k-1}_i) + \langle x_i - x_i^{k-1} ; \nabla              f_i(x_i^{k-1}) \rangle + \dots \right.\right. \\
    \left.\left. \dots + \frac{1}{\gamma} D_h(x_i,x_i^{k-1})  + g(x_i) \right)
    + \iota_C(x_1,\dots,x_M)  \vphantom{\frac{1}{M}\sum_{i=1}^M} \right\}
\end{multline}
where the particular form of $\iota_C$ immediately leads to $x_1^k=\dots=x_M^k$, and therefore, for all $i\in 1,\dots,M$,
\begin{align}
    x_i^k
     & = \argmin_{x\in\Constraints} \left\{ g(x) +  \frac{1}{M}\sum_{i=1}^M \left( f_i(x^{k-1}_i) + \langle x - x_i^{k-1} ; \nabla f_i(x_i^{k-1}) \rangle + \frac{1}{\gamma} D_h(x,x_i^{k-1}) \right)  \right\} \\
     & = \argmin_{x\in\Constraints} \left\{ \gamma g(x) + h(x) +    \left\langle x  ; \frac{1}{M}\sum_{i=1}^M \left( \gamma \nabla f_i(x_i^{k-1}) - \nabla h(x^{k-1})   \right) \right\rangle   \right\}.
\end{align}

We obtain an asynchronous variant of this iteration by following the same steps as in \cref{sec:smooth}.  First we replace $ \nabla f_i(x^{k-1})$
by $ \nabla f_i(x^{k-D_i^k})$ for each worker $i$. Second, we have two choices for replacing $\nabla h(x^{k-1})$:
\begin{itemize}
    \item either it is kept at the last iterate $x^{k-1}$, which leads to the Bregman version of the PIAG method, which appears in \cite[Sec. V]{aytekin2016analysis}: \begin{align}
              x^k & = \argmin_{x\in\Constraints} \left\{ \gamma g(x) + h(x) +    \left\langle x  ; \frac{1}{M}\sum_{i=1}^M \left( \gamma \nabla f_i(x_i^{k-D_i^k}) - \nabla h(x^{k-1})   \right) \right\rangle   \right\} \label{alg:piagBregman}
          \end{align}
    \item or, as prescribed here, we set it to the point corresponding to the gradient computation $x^{k-D_i^k}$, which leads to:
          \begin{align}
              x^k & = \argmin_{x\in\Constraints} \left\{ \gamma g(x) + h(x) +    \left\langle x  ; \frac{1}{M}\sum_{i=1}^M \left( \gamma \nabla f_i(x_i^{k-D_i^k}) - \nabla h(x^{k-D_i^k})   \right) \right\rangle   \right\}  .
          \end{align}
\end{itemize}
We consider here the second option, inspired by the better results of \cite{mishchenko2020distributed} for the Euclidean case. Note also that the theory developed in \cite{aytekin2016analysis} for the first option is based on the strong assumption that the divergence $D_h$ should be lower and upper bounded by the squared Euclidean distance, which is not the case in \Cref{example} for example.

%%%%%%%%%%%%%%%%%%%%%%%%%%%%%%%%%%%%%%%%%%%%%%%%%%
\subsection{Algorithm \& Practical implementation}
%%%%%%%%%%%%%%%%%%%%%%%%%%%%%%%%%%%%%%%%%%%%%%%%%%

We are now ready to state our asynchronous Bregman proximal-gradient algorithm. First, the central machine initializes $x^0\in\interior\dom h $ and sends it to all workers. Then, the central machine keeps track of the quantity
\begin{align}
    \overline{u}^k :=  \frac{1}{M} \sum_{i=1}^M \underbrace{ \gamma \nabla  f_i(x^{k-D_i^k}) - \nabla h(x^{k-D_i^k}) }_{:=u^k_i}
\end{align}
where  $u_i^k$ corresponds to the last contribution of agent $i$, received at time $k-d_i^k$ (hence $u_i^k=u_i^{k-1}=\cdots=u_i^{k-d_i^k}$), which results from its local computation from point $x^{k-D_i^k}$.

A key feature of our algorithm emerges here: regardless of each worker's response delay $D_i^k$, their contribution to the central node's aggregate $\overline{u}^k$ remains constant. As we will illustrate in our subsequent analysis, this allows us to select a constant step-size $\gamma$ that is independent of any hypothetical bound on these delays.

Along the algorithm, an iteration is triggered as soon as the central machine receives a communication. At the $k$-th iteration, with an incoming call from worker $i$, the central machine:
\begin{itemize}
    \item receives a contribution $u_i$ from agent $i$,
    \item updates $\overline{u}^k$ by setting $u_i^k = u_i $ and $u_{i'}^k = u_{i'}^{k-1} $ for all $i'\neq i $,
    \item computes the new point
          \begin{align}
              x^{k} = \argmin_{x\in\Constraints} \left\{   h(x) + \gamma g(x) +  \left\langle \overline{u}^k , x \right\rangle  \right\}
          \end{align}
    \item sends $x^k$ to agent $i$.
\end{itemize}

This algorithm is described in \Cref{alg:dave_md}, where the workers communicate only the adjustments between two iterates (to avoid storing all $u_i$ at the central machine).

\tcbset{width=1.0\columnwidth,before=,after=, colframe=black,colback=white, fonttitle=\bfseries, coltitle=white, colbacktitle=red!80!yellow, boxrule=0.2mm}
\begin{algorithm}[H]
    \caption{\texttt{Asynchronous Bregman proximal-gradient algorithm}\label{alg:dave_md}}
    \centering
    \begin{multicols}{2}
        \begin{tcolorbox}[title=Central machine:]
            Initialize $\overline u$, $k=0$\\
            {\color{red!80!yellow} Send $\overline u$ to all workers}\\
            \While{not stopped}{
            \textbf{when} a worker finishes:\\
            {\color{blue!70!black}Receive adjustment $\Delta$ from it}\\
            $\overline{u} \leftarrow \overline{u} + \frac{\Delta}{M}$\\
            \resizebox{0.96\textwidth}{!}{$\displaystyle x \leftarrow \argmin_{x\in\Constraints } h(x) + \gamma g(x) +  \left\langle\overline{u},x\right\rangle  $}\\
            {\color{red!80!yellow} Send $x$ to the agent in return}\\
            $k\leftarrow k+1$
            }
            Interrupt all slaves\\
            \textbf{Output} $ x$
        \end{tcolorbox}

        \columnbreak
        \tcbset{width=1.03\columnwidth,before=\hspace{-0.2cm}, colframe=black!50!black, colbacktitle=blue!70!black}
        \begin{tcolorbox}[title=Worker $i$:]
            Initialize $u = \overline u$\\
            \While{not interrupted by central machine}{
            {\color{red!80!yellow}Receive the most recent $x$}\\
            $u^+ \leftarrow \gamma  \nabla f_i(x) - \nabla h(x)$\\
            $\Delta \leftarrow u^+ - u $\\
            $u \leftarrow u^+$\\
            {\color{blue!70!black}Send adjustment $\Delta$ to central machine}
            }
        \end{tcolorbox}
    \end{multicols}
\end{algorithm}

%%%%%%%%%%%%%%%%%%%%%%%%%%%%%%%%%%%%%%%%%%%%%%%%%%
%%%%%%%%%%%%%%%%%%%%%%%%%%%%%%%%%%%%%%%%%%%%%%%%%%
\section{Analysis of the algorithm}\label{sec:analysis}
%%%%%%%%%%%%%%%%%%%%%%%%%%%%%%%%%%%%%%%%%%%%%%%%%%
%%%%%%%%%%%%%%%%%%%%%%%%%%%%%%%%%%%%%%%%%%%%%%%%%%

The analysis of the algorithm described in the last section consists of two parts, to which the next two subsections are devoted: we establish first that the iterates are well-defined, and second that they converge to the minimum of \eqref{eq:base}, under no specific assumption on the computing system.

%%%%%%%%%%%%%%%%%%%%%%%%%%%%%%%%%%%%%%%%%%%%%%%%%%
\subsection{The algorithm is well-defined}\label{sec:well}
%%%%%%%%%%%%%%%%%%%%%%%%%%%%%%%%%%%%%%%%%%%%%%%%%%

To show that the iterates produced by \Cref{alg:dave_md} are valid, we first show a generic result on the Bregman proximal gradient operator, which is at the core of our developments.  The proof of this result simply consists in applying, in the product space $\bigtimes_{i=1}^M \Constraints$, the general well-posedness result of Bregman proximal gradient operator of \cite[Lemma 2]{bauschke2017descent}.

\begin{lemma}[Well-posedness]\label{lem:defined}
    Let \cref{asm:pb,asm:reg,asm:fun} hold. For any stepsize $\gamma >0$ and any vector $y=(y_1,\dots,y_M)\in  \bigtimes_{i=1}^M \interior\dom h$, the operator
    \begin{align}
        T_\gamma(y)  := \argmin_{x\in\Constraints} \left\{ \gamma g(x) + h(x) +    \left\langle x  ; \frac{1}{M}\sum_{i=1}^M \left( \gamma \nabla f_i(y_i) - \nabla h(y_i)   \right) \right\rangle   \right\}
    \end{align}
    is non-empty and single-valued in $\interior \dom h$.
\end{lemma}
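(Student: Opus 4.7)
The plan is to reduce the statement to the standard single-block well-posedness result \cite[Lem.~2]{bauschke2017descent} by lifting the data to the product space $\bigtimes_{i=1}^M \Constraints$, where $T_\gamma(y)$ will appear as the trace on the consensus set of a genuine Bregman proximal-gradient step.

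First I would define on $\bigtimes_{i=1}^M \mathbb{R}^n$ the lifted regularizer, smooth part, and nonsmooth part
\[
\mathbf{H}(\mathbf{x}) := \tfrac{1}{M}\sum_{i=1}^M h(x_i),\qquad
\mathbf{F}(\mathbf{x}) := \tfrac{1}{M}\sum_{i=1}^M f_i(x_i),\qquad
\mathbf{G}(\mathbf{x}) := \tfrac{1}{M}\sum_{i=1}^M g(x_i) + \iota_C(\mathbf{x}).
\]
Then I would verify that these inherit the hypotheses required by \cite[Lem.~2]{bauschke2017descent}: $\mathbf{H}$ is Legendre on $\Constraints^M$ (a separable sum of Legendre functions is Legendre, since strict convexity is preserved coordinate-wise and a sequence tending to $\partial\Constraints^M$ must have at least one coordinate tending to $\partial\Constraints$, forcing $\|\nabla \mathbf{H}\|\to\infty$ via \cref{asm:reg}); $\mathbf{F}$ is convex, differentiable on $\interior\dom \mathbf{H} = \bigtimes_{i=1}^M \interior\dom h$, and $L$-smooth relative to $\mathbf{H}$ by \cref{asm:fun} applied in each block; and $\mathbf{G}$ is proper, lsc, convex, with $\dom \mathbf{G}\cap\interior\dom\mathbf{H}\neq\emptyset$ furnished by the diagonal embedding of any $x_0\in\dom g\cap\interior\Constraints$, which exists by \cref{asm:pb}.

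Next I would spell out the product-space Bregman proximal-gradient step at $\mathbf{y}=(y_1,\dots,y_M)\in\interior\dom\mathbf{H}$,
\[
\mathbf{T}_\gamma(\mathbf{y}) := \argmin_{\mathbf{x}\in\Constraints^M}\Bigl\{\gamma\mathbf{G}(\mathbf{x}) + \mathbf{H}(\mathbf{x}) + \bigl\langle\mathbf{x},\,\gamma\nabla\mathbf{F}(\mathbf{y}) - \nabla\mathbf{H}(\mathbf{y})\bigr\rangle\Bigr\}.
\]
Because of $\iota_C$, the minimization reduces to the consensus set $x_1=\cdots=x_M=x$, where the objective collapses (using the coordinate-wise form of $\nabla\mathbf{H}$ and $\nabla\mathbf{F}$) to exactly the one defining $T_\gamma(y)$. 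Hence \cite[Lem.~2]{bauschke2017descent}, applied in the product setting, delivers a unique minimizer $\mathbf{x}^\star\in\interior\dom\mathbf{H}$; by its consensus form $(x^\star,\dots,x^\star)$, the scalar $x^\star\in\interior\dom h$ is the unique element of $T_\gamma(y)$, proving simultaneously non-emptiness, single-valuedness, and the inclusion in $\interior\dom h$.

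The only step with any real content is checking that the separable sum $\mathbf{H}$ is again Legendre, in particular the boundary blow-up of $\nabla\mathbf{H}$; the smoothness transfer, the qualification $\dom\mathbf{G}\cap\interior\dom\mathbf{H}\ne\emptyset$, and the identification of the consensus minimizer with $T_\gamma(y)$ are purely bookkeeping, so I do not foresee a serious obstacle.
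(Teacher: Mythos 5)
Your proposal follows essentially the same route as the paper: lift the data to the product space, fold the consensus indicator $\iota_C$ into the nonsmooth part, check the hypotheses of \cite[Lem.~2]{bauschke2017descent} for the lifted functions, and read $T_\gamma(y)$ off the diagonal of the resulting single-valued map into $\bigtimes_{i=1}^M \interior\dom h$. The only minor difference is that the paper also explicitly verifies that the lifted problem has a non-empty compact solution set (guaranteed by \cref{asm:pb}~iii), which is one of the hypotheses of the cited lemma and should be added to your list of checks.
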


\begin{proof}
    Let $\gamma >0$ and $y=(y_1,\dots,y_M)\in  \bigtimes_{i=1}^M \interior\dom h$, let us define the operator

    \begin{multline}
        \mathbb{T}_\gamma(y) := \argmin_{x\in\bigtimes_{i=1}^M \Constraints}
        \left\{ \frac{1}{M} \sum_{i=1}^M g(x_i) + \frac{1}{M} \sum_{i=1}^M \left[ f_i(y_i) + \left\langle \nabla f_i(y_i) , x_i - y_i \right\rangle \right] + \dots \right. \\
        \left. \dots  + \frac{1}{\gamma M}  \sum_{i=1}^M D_h(x_i,y_i) + \iota_C(x)
        \vphantom{\sum_{i=1}^M} \right\} \label{eq:TT}
    \end{multline}
    with $\iota_C$ the consensus indicator defined by \eqref{eq:C}.
    By applying the same reasoning as in \cref{sec:bregmanasync}, it is easy to see that $\mathbb{T}_\gamma(y) = (T_\gamma(y),\dots,T_\gamma(y))$.

    Under \cref{asm:pb,asm:reg,asm:fun}, we have that i) $x\mapsto \iota_C(x) + \frac{1}{M} \sum_{i=1}^M g(x_i)$ is proper, lsc., and convex on $\bigtimes_{i=1}^M \Constraints$; ii)  $x\mapsto \frac{1}{M} \sum_{i=1}^M h(x_i)$ is Legendre on $\bigtimes_{i=1}^M \Constraints$; iii)  $x\mapsto \frac{1}{M} \sum_{i=1}^M f_i(x_i)$ is proper, lsc., convex, and differentiable on $\interior\dom h$; iv) $\dom \iota_C \cap (\dom g)^M \cap \bigtimes_{i=1}^M \interior\dom h  \neq \emptyset$; and v) the problem $\inf_{x\in\ProdSpace}\{ \iota_C(x) + \frac{1}{M} \sum_{i=1}^M (f(x_i) + g(x_i)) \}$ has a non-empty compact solution set. These conditions enable us to apply \cite[Lem. 2]{bauschke2017descent} which gives that $\mathbb{T}_\gamma(y)$ is non-empty, single-valued, and maps $ \bigtimes_{i=1}^M \interior\dom h  $ to $ \bigtimes_{i=1}^M \interior\dom h$.
    In turn, we obtain that $T_\gamma(y)$ is non-empty and single-valued in $\interior \dom h$. \end{proof}

Thus, noticing that in our algorithm, the central machine generates a new iterate $x^k$, as the point produced by applying $T_\gamma$ to $ y^k=(x^{k-D_1^k},\dots,x^{k-D_M^k}) $, ie.
\begin{align}
    x^{k} = T_\gamma(y^k) = \argmin_{x\in\Constraints} \left\{ \gamma g(x) + h(x) +    \left\langle x  ; \overbrace{\frac{1}{M}\sum_{i=1}\underbrace{\left( \gamma \nabla f_i(x^{k-D_i^k}) - \nabla h(x^{k-D_i^k})   \right)}_{:= u_i^k}}^{\overline{u}^k} \right\rangle   \right\}.
\end{align}
Thus \cref{lem:defined} implies that our algorithm is well-defined, from an initial iterate $x^0\in \interior \dom h$. Indeed, the points $(x^{k'})_{k'<k}$ were generated by the central machine and thus belongs to $\interior \dom h $; in turn, the input at iteration $k$ is  $y^k=(x^{k-D_1^k},\dots,x^{k-D_M^k})$ and belongs to $ \bigtimes_{i=1}^M \interior \dom h $. The algorithm is thus well defined and produces points in $\interior\dom h$.

%%%%%%%%%%%%%%%%%%%%%%%%%%%%%%%%%%%%%%%%%%%%%%%%%%
\subsection{Convergence result}
%%%%%%%%%%%%%%%%%%%%%%%%%%%%%%%%%%%%%%%%%%%%%%%%%%

We now focus on the convergence of the algorithm, established in forthcoming \cref{th:conv}. The proof of convergence consists in carefully combining the contraction results of Bregman operators \cite{chen1993convergence,bauschke2017descent} together with the techniques developed in \cite{mishchenko2020distributed} for the asynchronous proximal algorithms.

\begin{lemma}[Contraction]\label{lem:descentasync}
    Let \cref{asm:pb,asm:reg,asm:fun} hold. For any stepsize $\gamma >0$ and any vector $y=(y_1,\dots,y_M)\in  \bigtimes_{i=1}^M \interior\dom h$, and any $u\in\Constraints$,
    \begin{align}
        D_{{{h}}}(u,T_\gamma(y)) & \leq \frac{1}{M} \sum_{i=1}^M D_h(u,y_i)  - \frac{1-\gamma L}{M} \sum_{i=1}^M D_h(T_\gamma(y),y_i) - \gamma ( F(T_\gamma(y)) -  F(u) ) .
    \end{align}
\end{lemma}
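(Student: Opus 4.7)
The plan is to reduce this to the single-machine composite Bregman descent result by lifting to the product space $\ProdSpace = \bigtimes_{i=1}^M \Constraints$, in the same spirit as the proof of \cref{lem:defined}. Let $\tilde h(\mathbf{x}) = \frac{1}{M}\sum_{i=1}^M h(x_i)$, which inherits the Legendre property from $h$; $\tilde f(\mathbf{x}) = \frac{1}{M}\sum_{i=1}^M f_i(x_i)$, which is $L$-smooth relative to $\tilde h$ since $L\tilde h - \tilde f = \frac{1}{M}\sum_i (Lh - f_i)$ is a sum of convex functions; and $\tilde g(\mathbf{x}) = \frac{1}{M}\sum_{i=1}^M g(x_i) + \iota_C(\mathbf{x})$, which is proper, lsc, and convex. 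As observed in \cref{lem:defined}, the minimizer $\mathbb{T}_\gamma(\mathbf{y})$ of \eqref{eq:TT} is the consensus vector $(T_\gamma(\mathbf{y}),\dots,T_\gamma(\mathbf{y}))$, and it is exactly the Bregman proximal-gradient step for $(\tilde f, \tilde g, \tilde h)$ applied at $\mathbf{y}$.

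Second, I would invoke (or equivalently re-derive) the composite version of \eqref{eq:contractionB} from \cite{bauschke2017descent} in the lifted space: for any $\mathbf{u} \in \dom \tilde g$,
\begin{align*}
D_{\tilde h}(\mathbf{u}, \mathbb{T}_\gamma(\mathbf{y})) \leq D_{\tilde h}(\mathbf{u}, \mathbf{y}) - (1-\gamma L)\, D_{\tilde h}(\mathbb{T}_\gamma(\mathbf{y}), \mathbf{y}) - \gamma\bigl(\tilde F(\mathbb{T}_\gamma(\mathbf{y})) - \tilde F(\mathbf{u})\bigr),
\end{align*}
with $\tilde F = \tilde f + \tilde g$. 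Evaluated at the consensus lift $\mathbf{u} = (u,\dots,u)$ (for which $\iota_C(\mathbf{u}) = 0$), each Bregman and functional term separates across coordinates: $D_{\tilde h}(\mathbf{u}, \mathbb{T}_\gamma(\mathbf{y})) = D_h(u, T_\gamma(\mathbf{y}))$, $D_{\tilde h}(\mathbf{u}, \mathbf{y}) = \frac{1}{M}\sum_i D_h(u, y_i)$, $D_{\tilde h}(\mathbb{T}_\gamma(\mathbf{y}), \mathbf{y}) = \frac{1}{M}\sum_i D_h(T_\gamma(\mathbf{y}), y_i)$, $\tilde F(\mathbf{u}) = F(u)$, and $\tilde F(\mathbb{T}_\gamma(\mathbf{y})) = F(T_\gamma(\mathbf{y}))$. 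Substituting yields the announced inequality. The case $u \notin \dom g$ is trivial since the right-hand side becomes $+\infty$.

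If the composite contraction is not available off-the-shelf, I would derive it directly in two lines. The optimality condition for $x^+ := T_\gamma(\mathbf{y}) \in \interior\dom h$ gives a subgradient $\xi \in \partial g(x^+)$ satisfying $\gamma \xi = \frac{1}{M}\sum_i \nabla h(y_i) - \nabla h(x^+) - \frac{\gamma}{M}\sum_i \nabla f_i(y_i)$. Combining the subgradient inequality $\gamma g(x^+) - \gamma g(u) \leq \gamma \langle \xi, x^+ - u\rangle$ with the three-point identity $\langle \nabla h(x^+) - \nabla h(y_i), u - x^+\rangle = D_h(u, y_i) - D_h(u, x^+) - D_h(x^+, y_i)$ and, for each $i$, the convexity of $f_i$ at $u$ minus the $L$-relative smoothness inequality for $f_i$ between $y_i$ and $x^+$ (which gives $\langle \nabla f_i(y_i), u - x^+\rangle \leq f_i(u) - f_i(x^+) + L D_h(x^+, y_i)$), and finally averaging over $i$, produces the claim after rearrangement.

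The proof is essentially bookkeeping once the lifting is set up; the only real subtlety is verifying that the relative-smoothness constant $L$ of the averaged potential $\tilde f$ with respect to $\tilde h$ is the same $L$ as in \cref{asm:fun}, which is immediate from the averaging, and that the coefficient of $\sum_i D_h(x^+, y_i)$ in the final inequality combines to $-(1-\gamma L)/M$ as stated.
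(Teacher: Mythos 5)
Your proposal is correct and matches the paper's own proof: both lift the problem to the product space, identify $\mathbb{T}_\gamma(y)=(T_\gamma(y),\dots,T_\gamma(y))$, and apply the composite contraction lemma \cite[Lem.~5]{bauschke2017descent} at the consensus point $(u,\dots,u)$ before separating the terms coordinate-wise. The fallback direct derivation you sketch is also sound but unnecessary, since the cited lemma applies exactly as you describe.
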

\begin{proof}
    The proof follows the same reasoning as the one of \Cref{lem:defined}: working on the product space and deriving properties on the operator $\mathbb{T}_\gamma$. By applying \cite[Lem. 5]{bauschke2017descent} (whose assumptions are verified as in \Cref{lem:defined}) with $(u,\dots,u)\in\Constraints^M$ and $\mathbb{T}_\gamma(y)$,

    \begin{multline}
        \frac{1}{M} \sum_{i=1}^M D_h(u,[\mathbb{T}_\gamma(y)]_i) \leq \frac{1}{M} \sum_{i=1}^M D_h(u,x_i) -  \frac{1-\gamma L}{M} \sum_{i=1}^M D_h([\mathbb{T}_\gamma(y)]_i,y_i) \dots \\
        ~~ \dots - \gamma \left( \iota_C(\mathbb{T}_\gamma(y)) + \frac{1}{M}\sum_{i=1}^M f_i([\mathbb{T}_\gamma(y)]_i) + g([\mathbb{T}_\gamma(y)]_i) \dots \right. \\
        \left. \dots- \iota_C(u,\dots,u) - \frac{1}{M}\sum_{i=1}^M f_i(u) + g(u) \right)
    \end{multline}

    and then, since  $\mathbb{T}_\gamma(y) = (T_\gamma(y),\dots,T_\gamma(y))$, we obtain the claimed result.
\end{proof}

Since any solution $x^\star$ of \eqref{eq:pb} belongs to $\dom h = \Constraints$, applying \Cref{lem:descentasync} to the iterates produced by our algorithm with $u=x^\star$ shows that for all $k$,
\begin{align}
    \label{eq:descent_algo}
    D_{{{h}}}(x^\star,x^{k}) & \leq \frac{1}{M} \sum_{i=1}^M D_h(x^\star,x^{k-D_i^k})  - \frac{1-\gamma L}{M} \sum_{i=1}^M D_h(x^{k},x^{k-D_i^k})\dots \\
                             & ~~~~~~~~~ \cdots- \gamma \left( F(x^k) - F(x^\star) \right)
\end{align}
which is central in the proof of the following convergence result.
The pointwise convergence arguments in the proof of this result are based on Opial-type arguments for which the following additional condition is needed. This assumption is verified for many usual divergences, including the Boltzmann-Shannon entropy, the Hellinger distance, etc.
It corresponds to assumption \textbf{H} in \cite{bauschke2017descent}.

\begin{asm}[Level boundedness and limit]\label{asm:addh}
    For any $x\in\dom h$ and $t\in\mathbb{R}$, the set $\{y\in\interior\dom h : D_h(x,y)\leq t\}$ is bounded. In addition, $x^k\to x$ if and only if $D_h(x,x^k)\to 0$.
\end{asm}

\newpage

A key feature of the algorithm is that its convergence does not rely on prior knowledge or assumptions on the answering delays. It is only expected that the workers are never dropped, meaning that they all have a finite answering time. Let us introduce this assumption formally.

To do so we need to define the concept of ``epoch" denoted $m \in \mathbb{N}$, and of the first iteration $k_m$ of an ``epoch" $m$ (drawing from \cite{icml2018}).
\begin{itemize}
    \item An iteration $k$ belongs to epoch $m$ if it falls in the range $k_{m} \le k < k_{m+1}$
    \item The epoch $m$ starts at $0$ (together with $k$) and is incremented whenever the central node has made at least one \emph{full interaction} with each worker during the current epoch. Namely, whenever all workers have answered at least one query sent after $k_m$. Since at iteration $k$, the most recent computation by a worker $i$ was requested at iteration $k - D_i^k$, this condition therefore translates to $k_m \le k - D_i^k$:
          \begin{align}
              k_{m} & =\min\{k:\text{at least 1 full interaction per worker since }k_{m-1}\} \\
                    & = \min\{k: k-D_i^k \geq k_{m-1} \text{ for all } i=1,..,M \}.
          \end{align}
\end{itemize}

\begin{asm}[Worker participation]\label{asm:answer}
    Workers have a finite answering time, in other words, $m \underset{k \rightarrow +\infty}{\longrightarrow} +\infty$.
\end{asm}
Note that when \cref{asm:answer} does not hold, we have “a subset of workers ceases to respond” and this leads to a different problem: in such cases, the non-responsive workers can be discarded, and the analysis is restricted to the active workers.

We are now ready to formulate the main result of this paper, namely, the convergence of our asynchronous algorithm.

\begin{theorem}[Convergence]\label{th:conv}
    Let \cref{asm:pb,asm:reg,asm:fun,asm:addh,asm:answer}, hold and fix $\gamma \in (0,1/L)$. If \eqref{eq:pb} has a unique minimizer $x^\star$, then the sequence $(x^{k})$ generated by
    \Cref{alg:dave_md} converges to $x^\star$.
\end{theorem}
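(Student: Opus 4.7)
The plan is to apply the contraction inequality \eqref{eq:descent_algo} with $u = x^\star$ and set up a Fejér-type analysis adapted to unbounded asynchrony. Since $\gamma < 1/L$ and $x^\star$ minimizes $F$, both terms $\frac{1-\gamma L}{M}\sum_{i=1}^M D_h(x^k, x^{k-D_i^k})$ and $\gamma(F(x^k) - F(x^\star))$ on the right are nonnegative. Writing $\alpha_k := D_h(x^\star, x^k)$, dropping them yields the one-sided bound
\begin{equation*}
    \alpha_k \leq \frac{1}{M}\sum_{i=1}^M \alpha_{k - D_i^k}.
\end{equation*}
A straightforward induction gives $\alpha_k \leq \alpha_0$ for all $k$, and Assumption~\ref{asm:addh} then confines the iterates to a bounded subset of $\interior\dom h$.

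Next I would exploit the epoch structure of Assumption~\ref{asm:answer}. Observing that $k \mapsto k - D_i^k$ is nondecreasing for each worker $i$, the definition of $k_m$ gives $k - D_i^k \geq k_{m-1}$ for every $k \geq k_m$ and every $i$. Define the Lyapunov sequence $M_m := \sup_{k \geq k_m} \alpha_k$. Applying the one-sided bound above at any $k \geq k_m$, each term $\alpha_{k - D_i^k}$ is majorized by $M_{m-1}$, so $\alpha_k \leq M_{m-1}$, and hence $M_m \leq M_{m-1}$. The sequence $(M_m)$ is therefore nonincreasing and converges to some $M_\infty \geq 0$.

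The main obstacle — and the only non-routine step — is ruling out $M_\infty > 0$. For each $m$, choose $k_m^* \geq k_m$ with $\alpha_{k_m^*} \geq M_m - 1/m$, so $\alpha_{k_m^*} \to M_\infty$. Evaluating \eqref{eq:descent_algo} at $k = k_m^*$ and bounding $\alpha_{k_m^* - D_i^{k_m^*}} \leq M_{m-1}$ gives
\begin{equation*}
    \gamma\bigl(F(x^{k_m^*}) - F(x^\star)\bigr) \leq M_{m-1} - \alpha_{k_m^*} \;\xrightarrow[m\to\infty]{}\; M_\infty - M_\infty = 0.
\end{equation*}
By the boundedness established in the first step, extract a subsequence $(x^{k_{m_j}^*})$ converging to some $\bar x \in \Constraints$. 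Lower semicontinuity of $F$ then yields $F(\bar x) \leq F(x^\star)$, so $\bar x$ is a minimizer of \eqref{eq:pb} and $\bar x = x^\star$ by the uniqueness hypothesis. The second part of Assumption~\ref{asm:addh} converts $x^{k_{m_j}^*} \to x^\star$ into $\alpha_{k_{m_j}^*} \to 0$, which contradicts $M_\infty > 0$.

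Hence $M_\infty = 0$, so $\alpha_k \to 0$ for the entire sequence (given $\varepsilon > 0$, pick $m$ with $M_m < \varepsilon$ and then $\alpha_k < \varepsilon$ for all $k \geq k_m$), and a final application of Assumption~\ref{asm:addh} delivers $x^k \to x^\star$. The Bregman (rather than Euclidean) geometry only complicates the equivalence between convergence in $D_h$ and in norm — exactly what Assumption~\ref{asm:addh} is designed for — while the unbounded-delay feature is absorbed by working with the epoch-wise supremum $M_m$ instead of trying to force a standard Fejér monotonicity on the individual distances $\alpha_k$.
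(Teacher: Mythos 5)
Your proposal is correct and follows essentially the same route as the paper's proof: both drop the nonnegative terms in \eqref{eq:descent_algo} to get an epoch-wise monotone Lyapunov quantity (your $M_m=\sup_{k\ge k_m}\alpha_k$ coincides with the paper's $\max_{k\in[k_m,k_{m+1})}D_h(x^\star,x^k)$ once monotonicity is established), then use the retained slack $\gamma(F(x^k)-F(x^\star))$ along a (near-)maximizing subsequence together with level-boundedness, lower semicontinuity, uniqueness of the minimizer, and \cref{asm:addh} to force the limit to zero. The only differences — averaging instead of taking the max over workers, an $\varepsilon$-argmax instead of an exact argmax, and a contradiction framing — are cosmetic.
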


\begin{proof}
    Let us start with using~\eqref{eq:descent_algo}. Since $\gamma \in (0,1/L)$, we can disregard the second term involving $D_h(x^{k},x^{k-D_i^k})$ which is negative. Then we get that for any $k\geq k_{m}$ the following bound:
    \begin{align}
        D_{{{h}}}(x^\star,x^{k}) + \gamma \left(F(x^k) - F(x^\star) \right)
        \leq \max_{i}   D_h(x^\star,x^{k-D_i^k}) \leq \max_{k'\in[k_{m-1},k)}\!D_h(x^\star,x^{k'}).\label{eq:bound}
    \end{align}
    In particular, for $k=k_m$, this implies
    \begin{align}
        D_{{{h}}}(x^\star,x^{k_m}) & \leq  \max_{k'\in[k_{m-1},k_m)}   D_h(x^\star,x^{k'}). \label{eq:base}
    \end{align}

    Applying now \eqref{eq:bound} for $k=k_m+1$, we obtain
    \begin{align}
        D_{{{h}}}(x^\star,x^{k_m+1}) + \gamma \left( F(x^{k_m+1}) - F(x^\star) \right) & \leq \max\left\{  D_{{{h}}}(x^\star,x^{k_m}) ,   \max_{k'\in[k_{m-1},k_m)}   D_h(x^\star,x^{k'}) \right\} \\
        & \leq    \max_{k'\in[k_{m-1},k_m)}   D_h(x^\star,x^{k'})
    \end{align}
    where we use \eqref{eq:base} for the second inequality. Repeating this recursively, we obtain for all $k\in [k_m, k_{m+1})$
    \begin{align}
        \big(0 \leq \,\big) ~D_{{{h}}}(x^\star,x^{k}) + \gamma \left( F(x^{k}) - F(x^\star) \right) & \leq  \max_{k'\in[k_{m-1},k_m)}   D_h(x^\star,x^{k'}). \label{eq:bound2}
    \end{align}
    Observe that this bound yields
    \begin{align}
        \label{eq:bk}
        \max_{k\in [k_m, k_{m+1})}  D_{h}(x^\star,x^{k}) & \leq
        \max_{k'\in[k_{m-1},k_m)}   D_h(x^\star,x^{k'}).
    \end{align}
    In words, the non-negative sequence $\max_{k\in [k_m, k_{m+1})}  D_{h}(x^\star,x^{k})$
    is non-increasing and thus converges to a non-negative value.

    Consider now a sequence of time indices $(l^m)$ where the above maximum is attained:
    $$ l^m \in \arg\max_{k\in [k_m, k_{m+1})}  D_{h}(x^\star,x^{k}).$$
    Since $l^m \in [k_m, k_{m+1})$, we have
    \begin{align}
        D_{{{h}}}(x^\star,x^{l^m}) + \gamma \left( F(x^{l^m}) - F(x^\star) \right) \leq    \max_{k\in [k_m, k_{m+1})}  D_{{{h}}}(x^\star,x^{k}) + \gamma \left( F(x^{k}) - F(x^\star) \right)
    \end{align}
    which gives, with the help of \eqref{eq:bound2},
    \begin{align}
        D_{{{h}}}(x^\star,x^{l^m}) + \gamma \left( F(x^{l^m}) - F(x^\star) \right) & \leq   \max_{k'\in[k_{m-1},k_m)}   D_h(x^\star,x^{k'}) ~=~ D_h(x^\star,x^{l^{m-1}}) .
    \end{align}
    By letting $m\to \infty$ in this inequality, we obtain
    \begin{align}
        \lim_{m\to\infty} F(x^{l^m}) = F(x^\star).\label{eq:lim}
    \end{align}
    Since we have $D_{{{h}}}(x^\star,x^{l^m})\leq D_{{{h}}}(x^\star,x^{0})$, \cref{asm:addh}
    yields that $(x^{l^m})$ is bounded, so that we can extract a sub-sequence that converges to a point $\tilde x\in\dom h$. From \eqref{eq:lim}, $\tilde x$ must be a minimizer for $F$, and by uniqueness, we have $\tilde x = x^\star$.  Thus we have $D_{h}(x^\star,x^{l^m}) \longrightarrow_{m\to\infty} 0$.
    This allows us to conclude from~\eqref{eq:bound2}: the right-hand side vanishes, so that $D_{{{h}}}(x^\star,x^{k}) \to 0$ and  $F(x^{k}) \to F(x^\star)$.

\end{proof}

%%%%%%%%%%%%%%%%%%%%%%%%%%%%%%%%%%%%%%%%%%%%%%%%%%
%%%%%%%%%%%%%%%%%%%%%%%%%%%%%%%%%%%%%%%%%%%%%%%%%%
\section{Numerical illustrations}\label{sec:num}
%%%%%%%%%%%%%%%%%%%%%%%%%%%%%%%%%%%%%%%%%%%%%%%%%%
%%%%%%%%%%%%%%%%%%%%%%%%%%%%%%%%%%%%%%%%%%%%%%%%%%

In this section, we take a look at the practical behavior of our asynchronous Bregman algorithm on a simple synthetic distributed problem. We provide an illustration of the convergence result and a  comparison with competing algorithms. A complete computational study is out of the scope of this paper.
The numerical illustrations are operated in \texttt{Julia} \cite{bezanson2017julia} on a personal computer. We have packaged a toolbox implementing the algorithms and the experiments; it is publicly available at \url{github.com/Selim78/distributed-bregman}.

%%%%%%%%%%%%%%%%%%%%%%%%%%%%%%%%%%%%%%%%%%%%%%%%%%
\subsection{Experimental set-up}\label{sec:setup}
%%%%%%%%%%%%%%%%%%%%%%%%%%%%%%%%%%%%%%%%%%%%%%%%%%

We illustrate our algorithm on a regularized non-negative linear regression problem, which is a distributed variant of the problem of \cite[Sec.\;5.3]{bauschke2017descent} (see also \cite{csiszar1991least}). It consists in the minimization of the function of \cref{example} with an additional $\ell_1$-regularization to ensure that the problem has a unique minimizer.
Let $A\in\mathbb{R}_+^{m\times n}$ be a sensing matrix and $b\in\mathbb{R}_{++}^m$ a positive output vector. Assume that we split the $m$ samples of this dataset on $M$ workers (for simplicity, we consider in our experiments an even partition of $m/M$ examples).
With $a_j$ representing the $j$-th row of $A$, the objective function writes
\begin{align}\label{eq:pbnum}
    \min_{x\in\mathbb{R}_+^n} ~\frac{1}{M} \sum_{i=1}^M \underbrace{\left( \sum_{j=\frac{(i-1)m}{M}+1}^{\frac{im}{M}} \langle a_j , x \rangle \log \langle a_j , x \rangle - (\log b_j +1 )  \langle a_j , x \rangle + b_j  \right)}_{f_i(x)} + \lambda \|x\|_1.
\end{align}

In practice, the data is generated as follows. We take $n=100$ and $m=200$; the rows of matrix $A$ are drawn from a uniform distribution in $[0,1)$. The vector $b$ is generated as $b= A\bar x + \epsilon$ with a random positive $\bar x$ plus a Poisson noise with rate $1$.
This problem is distributed on $M=10$ processes living in separate memory domains using \texttt{Julia}'s Distributed module, which allows the control of process generation and communication between processes. To simulate some heterogeneity between the workers, we artificially slow down two workers by a factor\;$5$\;and\;$10$ respectively.

We solve this problem with the  three following Bregman algorithms using the Boltzmann-Shannon entropy $h(x) = \sum_{j=1}^n x_j \log x_j$ (since the first part of the objective function is $L$-smooth with respect to it with $L$ as in \eqref{eq:L} as discussed in \cref{example}):
\begin{itemize}
    \item the synchronous algorithm based on the iteration \eqref{eq:gradprox} (called Synchronous),
    \item the asynchronous variant based on the iteration \eqref{alg:piagBregman} (called Bregman-PIAG),
    \item our asynchronous variant (\Cref{alg:dave_md}).
\end{itemize}
We note that our asynchronous algorithm, as well as the synchronous one, is guaranteed to converge with the standard stepsize $\gamma < 1/L$. In contrast, Bregman-PIAG has no convergence guarantee, as recalled at the end of \cref{sec:bregmanasync}. In practice, we run all algorithms with, both, a theory-complying $0.99/L$ stepsize and a tuned stepsize.

\begin{figure}[!ht]
    \centering
    \resizebox{0.49\textwidth}{!}{\includegraphics{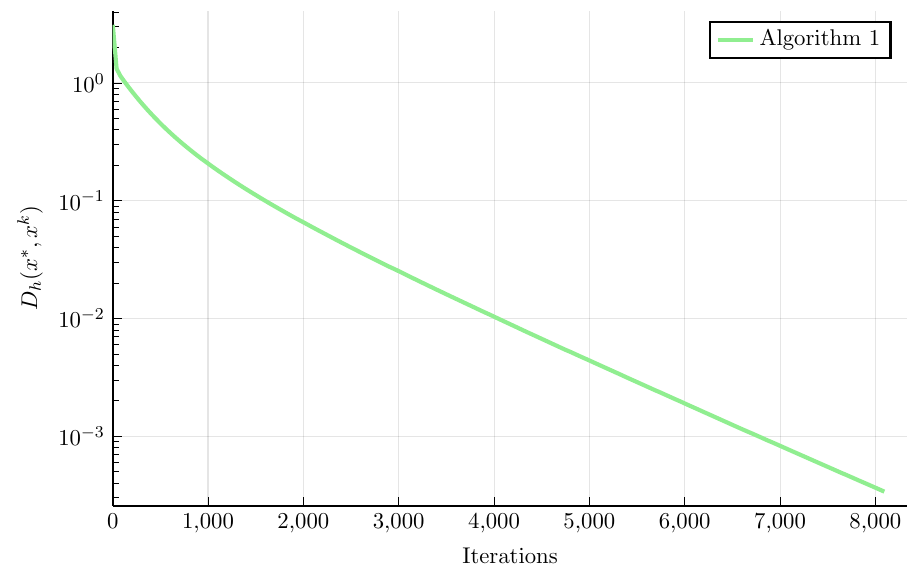}}
    \resizebox{0.49\textwidth}{!}{\includegraphics{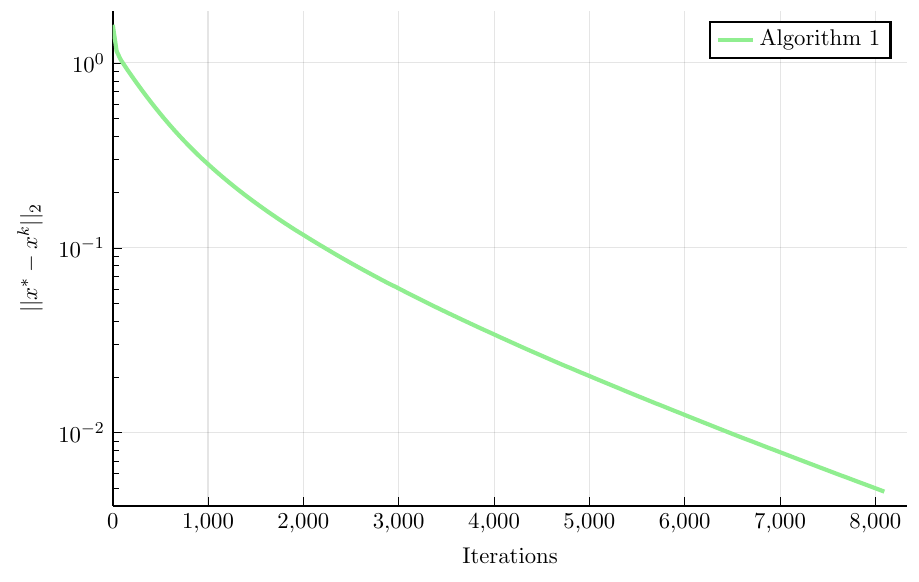}}
    \vspace*{-2ex}
    \caption{Convergence with respect to the number of iterations, illustrating \Cref{th:conv}}
    \label{fig:1}
\end{figure}

\vspace*{-2ex}

%%%%%%%%%%%%%%%%%%%%%%%%%%%%%%%%%%%%%%%%%%%%%%%%%%
\subsection{Experimental results}\label{sec:res}
%%%%%%%%%%%%%%%%%%%%%%%%%%%%%%%%%%%%%%%%%%%%%%%%%%

In \Cref{fig:1}, we show convergence of the iterates of \Cref{alg:dave_md} (with $\gamma=0.99/L$), as guaranteed by \Cref{th:conv}. The two plots display a decrease of, respectively, $D_h(x^k,x^\star)$ and $\| x^k-x^\star \|^2$ over iterations\footnote{The optimal solution $x^\star$ is obtained by running the synchronous algorithm for a long time.}. We see on the plots that this decrease is mostly monotonous; we note that the proof of \Cref{th:conv} establishes monotonicity for the worst Bregman divergence over epochs \eqref{eq:bk}.

In \Cref{fig:2}, we report the performance for the three algorithms described in the previous section. Since iterations have two different meanings for asynchronous or synchronous algorithms, we compare the convergence speed in terms of wallclock time. On the left-hand plot, we use the theoretical stepsize $\gamma$ for the three algorithms, and on the right-hand plot, we use tuned stepsizes\footnote{For each algorithm, $\gamma$ is chosen among the exponential of $1, 1.5, …, 2.5, 3$. The $\gamma$ retained is the one which produces the last iterate with the minimum Bergman distance to $x^*$ after a $100$ second run.}.
We notice a clear gain in performance with our approach, in the two cases. Note finally that the instance of the problem, generated as described in \Cref{sec:setup}, is only weakly heterogeneous. We choose it on purpose to compare our algorithm with two other algorithms in a situation that is not, apriori, the best for our algorithm. Even better performances can be obtained for stronger heterogeneity of the data or the system. A complete computational study is out of the scope of this paper, which is mainly methodological.

\begin{figure}[!h]
    \centering
    \resizebox{0.49\textwidth}{!}{\includegraphics{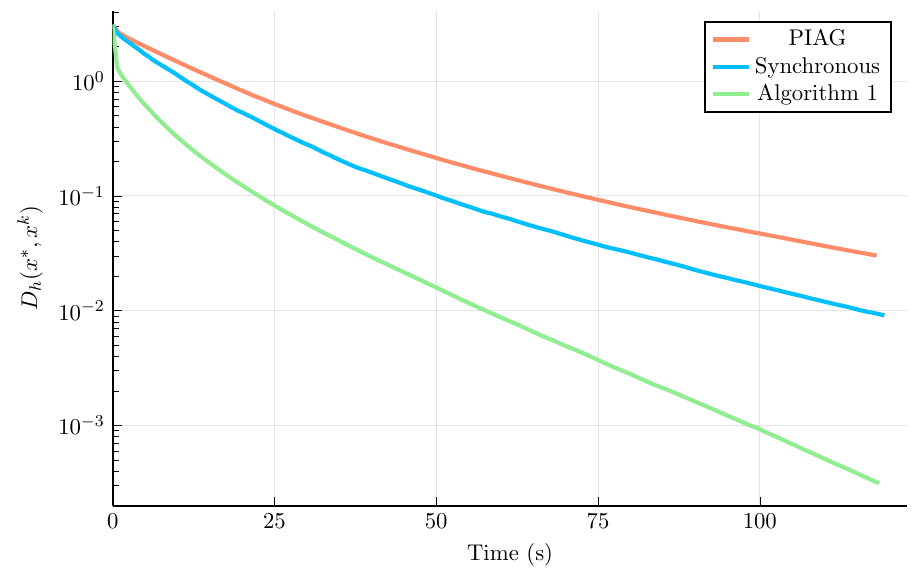}}
    \resizebox{0.49\textwidth}{!}{\includegraphics{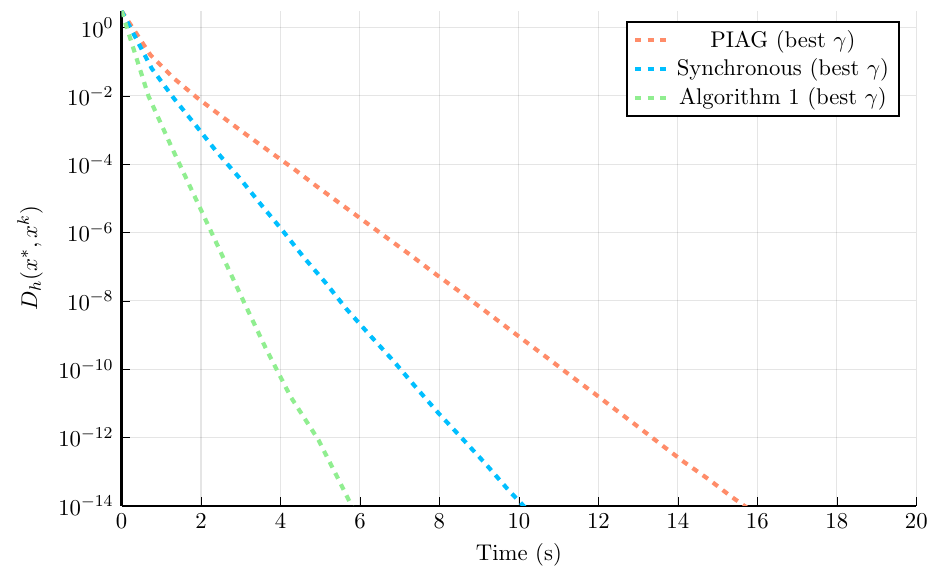}}
    \vspace*{-2ex}
    \caption{Comparison of our algorithm with two existing ones, with respect to wall-clock time}
    \label{fig:2}
\end{figure}

%%%%%%%%%%%%%%%%%%%%%%%%%%%%%%%%%%%%%%%%%%%%%%%%%%
%%%%%%%%%%%%%%%%%%%%%%%%%%%%%%%%%%%%%%%%%%%%%%%%%%
\section{Conclusions, perspectives}
%%%%%%%%%%%%%%%%%%%%%%%%%%%%%%%%%%%%%%%%%%%%%%%%%%
%%%%%%%%%%%%%%%%%%%%%%%%%%%%%%%%%%%%%%%%%%%%%%%%%%

In this paper, we provided an asynchronous version of the Bregman proximal gradient method. Building on efficient asynchronous methods for the Euclidean case and on smoothness models adapted to Bregman geometries, we derive and analyze a method that can handle any kind of delays, with a simple implementation using the same step size as in the synchronous case.
The light assumption on the delays combined with the subtlety of Bregman geometry make the analysis rather involved: we were able to show the convergence of our method whenever there is a unique minimizer to our problem; nevertheless, we believe that finer results could be derived using the same proof template. For instance, it may be possible to obtain a local convergence rate in the strongly convex case using the Legendre exponent reasoning of \cite{azizian2021last}.

%%%%%%%%%%%%%%%%%%%%%%%%%%%%%%%%%%%%%%%%%%%%%%%%%%
%%%%%%%%%%%%%%%%%%%%%%%%%%%%%%%%%%%%%%%%%%%%%%%%%%
\section*{Acknowledgements}
%%%%%%%%%%%%%%%%%%%%%%%%%%%%%%%%%%%%%%%%%%%%%%%%%%
%%%%%%%%%%%%%%%%%%%%%%%%%%%%%%%%%%%%%%%%%%%%%%%%%%

This work has been supported by MIAI Grenoble Alpes (ANR-19-P3IA-0003).

\bibliographystyle{spmpsci}  
\bibliography{references}

%%%%%%%%%%%%%%%%%%%%%%%%%%%%%%%%%%%%%%%%%%%%%%%%%%
\end{document}